\newtheorem{thm}{Theorem}[section]
\newtheorem{lemma}{Lemma}[section]
\newtheorem{rem}{Remark}[section]
\newtheorem{prop}{Proposition}[section]
\theoremstyle{definition}
\theoremstyle{remark}
\newcommand{\R}{{\mathbb R}}
\newcommand{\N}{{\mathbb N}}
\newcommand{\noi}{\noindent}
\numberwithin{equation}{section}
\begin{document}
\title[Benney-Lin]	
{Global solutions for the generalized Benney-Lin equation  posed on bounded intervals and on a half-line.
}
\author{N. A. Larkin}

\address
{
	Departamento de Matem\'atica, Universidade Estadual
	de Maring\'a, Av. Colombo 5790: Ag\^encia UEM, 87020-900, Maring\'a, PR, Brazil}
	\email{ nlarkine@uem.br;nlarkine@yahoo.com.br}

\thanks
{
MSC 2010:35B35;35K91;35Q53.\\
	Keywords: Benney-Lin equation; Korteweg-de Vries equation; Kuramoto-Sivashinsky equation; Zakharov-Kuznetsov equation; Global solutions 
}
\date{}

\begin{abstract}Initial boundary value problems for the generalized Benney-Lin equation posed on bounded intervals  and on the right half-line were considered. The existence and uniqueness of  global  regular solutions  on arbitrary intervals as well as their exponential decay for small solutions and for a special choice of a bounded interval have been established.
\end{abstract}

\maketitle

\section{Introduction}\label{introduction}
This work concerns the existence and uniqueness of global  solutions, regularity  and  exponential decay rates of solutions for some initial boundary value problems of the generalized Benney-Lin equation
\begin{align}
	& u_t+\eta D^5_x u+\beta D^4_x u+\alpha D^3_x u+ \gamma D^2_x u +u^ku_x=0,
\end{align} where $k\geq 1$ is a natural number.
This equation was deduced in \cite{benney} in connection with applications in fluid mechanics and later was exploited in \cite{lin} during research in the theory of liquid films. For various values of the coefficients 
$\alpha,\eta,\beta,\gamma$ and $k=1$, it presents well known equations of mathematical physics such as the Korteweg-de Vries equation when $\eta=\beta=\gamma=0$ and $\alpha=1$;  the Kawahara equation when $\gamma=\beta=\alpha=0$ and $\eta=-1$.
In the case $\eta=\alpha=0$ and $\beta, \gamma$ are positive constants, (1.1) is the Kuramoto-Sivashinsky equation 
In \cite{kuramoto}, Kuramoto studied the turbulent phase waves  and Sivashinsky in \cite{sivash} obtained an asymptotic equation which simulated the evolution of a disturbed plane flame front. See also \cite{Cross,cuerno}.
Mathematical  results on  initial and initial boundary value problems for various variants of (1.1)  are presented in \cite{araruna,Iorio, Guo,cousin,dor,familark,feng,lar,Larkin,larluch1,larluch2,marcio1,marcio2,temam1,saut2,temam2,zhang}, see references  there for more information. In  \cite{Iorio, feng,Larkin, zhang}, Kuramoto-Sivashinsky type equations have been considered which included $u_{xxx}$ (KdV) term. 
What concerns (1.1) with $k>1, \;\eta=\beta=\gamma=0;\;\alpha=1$, called generalized Korteweg-de Vries equations, the Cauchy problem  for (1.1) has been studied in \cite{farah,Fonseka1,Fonseka2,martel,merle}, where it has been proved that for $k=4$, called the critical case,  the initial problem is well-posed for small  initial data, whereas for arbitrary initial data, solutions may blow-up in a finite time. The generalized KdV equation was intensively studied in order to understand the interaction between the dispersive term and  nonlinearity in the context of the theory of nonlinear dispersive evolution equations \cite{jeffrey,kaku}. In \cite{lipaz}, an initial-boundary value problem for the generalized KdV equation with an internal damping posed on a bounded interval was studied in the critical case and in \cite{lar4}, an initial boundary value problem for the generalized KdV equation posed on a half-line was considered. Exponential decay of weak solutions for small initial data has been established. In \cite{araruna}, decay of weak solutions for $\eta=-1,\;\alpha=\beta=\gamma=0;\;k=2$ has been studied.\\
Here we study some initial boundary value problems for the generalized Benney-Lin (Bl) equation
\begin{align}
& u_t- D^5_x u+ D^4_x u+ D^3_x u+ D^2_x u +u^ku_x=0,
\end{align} where $k\geq 1$ is a natural number.

First essential problem that arises while one studies stability of (1.2) is a destabilizing effect of 
$D^2_x u$ that may be damped by a dissipative term $D^4_x u$ provided an interval $(0,L)$ has some specific properties. 
Our work has the following structure: Chapter I is Introduction. Chapter 2 contains notations and auxiliary facts. In Chapter 3, formulation and study of an initial boundary value problem for (1.2) posed on an arbitrary finite interval and on the right half-line are given. The existence of a global regular  solution for $k\leq 4$ and any positive time interval $(0,T)$ was established. in Chapter 4, an initial boundary value problem for (1.2) posed on a special interval $(0,L)$ was formulated. The existence of a global regular solution, uniqueness and exponential decay rate of small solutions for $k\leq 8$ have been established. Moreover,  the "smoothing" effect has been observed. Chapter 5 contains conclusions.

\section{Notations and Auxiliary Facts}

Let $L$ be a positive number and $x \in (0,L).$ We use the standard notations of Sobolev spaces $W^{k,p}$, $L^p$ and $H^k$ for functions and the following notations for the norms \cite{Adams, Brezis}:
for scalar functions $f(x,t):$\\
$$\R^+=\{t\in\R^1;\;t\geq 0\},\;\;\| f \|^2 =(f,f)= \int_0^L | f |^2dx, \;\; \| f \|_{L^p}^p = \int_0^L | f  |^p\, dx,$$
$$\| f \|_{W^{k,p}}^p = \sum_{0 \leq \alpha \leq k} \|D^\alpha_x f \|_{L^p},\; D^\alpha_x=\frac {d^{\alpha}}{dx^{\alpha}},\;D_x=D^1_x,\;D^0_xu=u.$$
We will use also the standad notations: $$D_xf=f_x,\;D_x^2 f=f_{xx},\;\frac{\partial}{\partial t}f=f_t.$$
When $p = 2$, $W^{k,p} = H^k$ is a Hilbert space with the scalar product 
$$((u,v))_{H^k}=\sum_{|j|\leq k}(D^ju,D^jv),\;\|u\|_{\infty}=
\|u\|_{L^{\infty}(0,L)}=ess \sup_{(0,L)}|u(x)|.$$
We use a notation $H_0^k(0,L)$ to represent the closure of $C_0^\infty(0,L)$, the set of all $C^\infty$ functions with compact support in $(0,L)$, with respect to the norm of $H^k$.

\begin{lemma}[Steklov's Inequality \cite{steklov}] Let $v \in H^1_0(0,L).$ Then
	\begin{equation}\label{Estek}
	\frac {\pi^2}{L^2}\|v\|^2 \leq \|v_x\|^2.
	\end{equation}
\end{lemma}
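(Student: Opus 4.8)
The plan is to establish \eqref{Estek} first for $v \in C_0^\infty(0,L)$, the dense subspace whose closure defines $H_0^1(0,L)$, and then to pass to the general case by a continuity/density argument. Since such a $v$ vanishes at both endpoints, it admits a convergent Fourier sine expansion adapted to the interval, namely $v(x)=\sum_{n=1}^\infty b_n \sin(n\pi x/L)$, where the functions $\{\sin(n\pi x/L)\}_{n\geq 1}$ form an orthogonal basis of $L^2(0,L)$ and are precisely the Dirichlet eigenfunctions of $-D_x^2$ on $(0,L)$, with eigenvalues $n^2\pi^2/L^2$. This is the natural framework, because the quantity we wish to bound from below, $\|v_x\|^2$, is exactly the Rayleigh quotient associated with this eigenvalue problem, whose minimum is attained on the first mode.

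Carrying this out, I would differentiate the series term by term to obtain $v_x(x)=\sum_{n=1}^\infty b_n (n\pi/L)\cos(n\pi x/L)$, valid in $L^2(0,L)$ since the cosines $\{\cos(n\pi x/L)\}_{n\geq 0}$ are likewise orthogonal there. Applying Parseval's identity to both expansions yields $\|v\|^2=\frac{L}{2}\sum_{n=1}^\infty b_n^2$ and $\|v_x\|^2=\frac{L}{2}\sum_{n=1}^\infty \frac{n^2\pi^2}{L^2} b_n^2$. The desired inequality is then immediate: since $n^2\geq 1$ for every $n\geq 1$, each term of the second sum dominates $\pi^2/L^2$ times the corresponding term of the first, so $\|v_x\|^2 \geq (\pi^2/L^2)\|v\|^2$, which is precisely \eqref{Estek}.

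The main obstacle is analytic rather than algebraic: one must justify the term-by-term differentiation and the simultaneous validity of Parseval for both $v$ and $v_x$. Working first on $C_0^\infty(0,L)$ sidesteps this difficulty, since the sine series of a smooth compactly supported function converges together with its derivative and every manipulation above is classical. The passage to a general $v\in H_0^1(0,L)$ then follows by choosing $v_m\in C_0^\infty(0,L)$ with $v_m\to v$ in $H^1(0,L)$: both $\|v_m\|^2$ and $\|(v_m)_x\|^2$ converge to $\|v\|^2$ and $\|v_x\|^2$ respectively, so the inequality for each $v_m$ is preserved in the limit. I would also note that equality holds exactly when only the first mode survives, i.e. $v(x)=b_1\sin(\pi x/L)$, confirming that $\pi^2/L^2$ is the sharp, largest admissible constant.
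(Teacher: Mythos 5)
Your proof is correct; note, however, that the paper does not actually prove this lemma at all --- it is stated as a known result with only the citation to Steklov's original 1896 article, so there is no in-paper argument to compare against. Your Fourier--Parseval route is the standard one: for $v\in C_0^\infty(0,L)$ the odd $2L$-periodic extension is smooth, so the sine series and its termwise derivative both converge, and the essential computation is that integration by parts (using $v(0)=v(L)=0$) gives the cosine coefficients of $v_x$ as $\frac{n\pi}{L}b_n$ with no constant mode, whence Parseval yields $\|v_x\|^2=\frac{L}{2}\sum_{n\ge1}\frac{n^2\pi^2}{L^2}b_n^2\ge\frac{\pi^2}{L^2}\|v\|^2$. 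Your density step is also sound, since both $\|\cdot\|$ and $\|D_x\cdot\|$ are continuous in the $H^1$ norm, and your remark that equality forces $v=b_1\sin(\pi x/L)$ correctly identifies $\pi^2/L^2$ as the first Dirichlet eigenvalue of $-D_x^2$, i.e.\ the sharp constant. The only stylistic caveat: one can avoid discussing convergence of the differentiated series entirely by defining the coefficients of $v_x$ directly via its own $L^2$ cosine expansion and relating them to those of $v$ by integration by parts, which is essentially what you do once you restrict to $C_0^\infty(0,L)$; as written the argument is complete.
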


\begin{lemma}(See \cite{larluch1}, Lemma 2.2.) 
	Let $u$ be either $ u\in H^2_0(0,L)$ or $u\in H^2(\R^+),\;u(0)=D_x(0)=0.$ Then the following inequality holds:
	\begin{equation}\label{GN1}
		\|u\|_{\infty}\leq \sqrt{2}\|D^2u\|^{\frac{1}{4}}\|u\|^{\frac{3}{4}}.
	\end{equation}
\end{lemma}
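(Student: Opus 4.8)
The plan is to obtain \eqref{GN1} from two elementary interpolation estimates, the decisive feature being that every boundary term produced by integration by parts vanishes under the stated hypotheses.

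First I would establish the zeroth-order bound
\begin{equation*}
\|u\|_\infty^2 \le 2\,\|u\|\,\|D_xu\|.
\end{equation*}
Since $u(0)=0$ in both settings, the fundamental theorem of calculus gives $u^2(x)=2\int_0^x u\,D_xu\,dy$; extending the integral to the whole domain and applying the Cauchy--Schwarz inequality yields the claim, and hence $\|u\|_\infty\le\sqrt2\,\|u\|^{1/2}\|D_xu\|^{1/2}$.

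Next I would prove the first-order interpolation
\begin{equation*}
\|D_xu\|^2 \le \|u\|\,\|D^2u\|.
\end{equation*}
Integrating $\int (D_xu)^2\,dx$ by parts produces the boundary term $[u\,D_xu]$ together with $-\int u\,D^2u\,dx$. The boundary contribution is discarded using $u(0)=D_xu(0)=0$ (together with $u(L)=D_xu(L)=0$ for $u\in H^2_0(0,L)$, or the decay of $u$ and $D_xu$ at infinity for $u\in H^2(\R^+)$), and Cauchy--Schwarz on the remaining integral finishes the step.

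Finally, substituting $\|D_xu\|\le\|u\|^{1/2}\|D^2u\|^{1/2}$ into the first estimate gives
\begin{equation*}
\|u\|_\infty \le \sqrt2\,\|u\|^{1/2}\bigl(\|u\|^{1/2}\|D^2u\|^{1/2}\bigr)^{1/2}=\sqrt2\,\|u\|^{3/4}\|D^2u\|^{1/4},
\end{equation*}
which is precisely \eqref{GN1} with the asserted constant. The main obstacle—such as it is—lies entirely in justifying the vanishing of the integrated terms, and this is where the two cases diverge: on $(0,L)$ it is immediate from the Dirichlet conditions defining $H^2_0$, whereas on the half-line one must invoke $H^2(\R^+)\hookrightarrow C^1$ with $u,D_xu\to0$ as $x\to\infty$ to discard the contribution at infinity. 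The rest is two applications of Cauchy--Schwarz, and the exponents are fixed by scaling so that no constant is wasted.
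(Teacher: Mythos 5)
Your argument is correct: both interpolation inequalities hold with the constants you claim, the boundary terms vanish for exactly the reasons you give in each of the two cases, and combining them yields $\|u\|_\infty\le\sqrt2\,\|D^2u\|^{1/4}\|u\|^{3/4}$ as required. The paper itself offers no proof of this lemma (it is quoted from \cite{larluch1}), and your derivation is the standard one that the cited reference uses, so there is nothing further to reconcile.
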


\begin{lemma}\label{*}(See \cite{niren}, p. 125).
	Suppose $u$ and $D^mu$, $m\in\mathbb{N},$ belong to $L^2(0,L)$. Then for the derivatives $D^iu$, $0\leq i<m$, the following inequality holds: 
	\begin{equation}\label{GN2}
		\|D^iu\|\leq A_1\|D^mu\|^{\frac{i}{m}}\|u\|^{1-\frac{i}{m}}+A_2\|u\|,
	\end{equation}
	where $A_1$, $A_2$ are constants depending only on $L$, $m$, $i$.
\end{lemma}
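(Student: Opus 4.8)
The plan is to reduce the general interpolation inequality \eqref{GN2} to the single base case $i=1$, $m=2$ and then to bootstrap by a log-convexity argument. The essential analytic content lies entirely in the base case, namely in establishing, for every $w$ with $w,D^2w\in L^2(0,L)$, the estimate
\begin{equation}
\|Dw\|\le A_1\|D^2w\|^{\frac12}\|w\|^{\frac12}+A_2\|w\|.\tag{B}
\end{equation}
I would prove (B) first through a purely local, scale-aware estimate. On a subinterval $I=(a,a+h)\subset(0,L)$ I split $I$ into its three equal thirds, use the mean value theorem for integrals to select a point $y_1$ in the first third and $y_3$ in the last third where $|w(y_j)|$ is bounded by $\sqrt3\,h^{-1/2}$ times the $L^2$-norm of $w$ over that third, and then expand $w(y_1)$ and $w(y_3)$ by second-order Taylor expansions about an arbitrary $x\in I$. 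Subtracting the two expansions gives $Dw(x)=\frac{w(y_3)-w(y_1)}{y_3-y_1}-\frac{R}{y_3-y_1}$ with a remainder $R$ involving only $D^2w$; since $y_3-y_1\ge h/3$, Cauchy--Schwarz yields the pointwise bound $|Dw(x)|\le C\bigl(h^{-3/2}\|w\|_{L^2(I)}+h^{1/2}\|D^2w\|_{L^2(I)}\bigr)$, and integrating its square over $I$ produces the scaled local estimate $\|Dw\|_{L^2(I)}\le C\bigl(h^{-1}\|w\|_{L^2(I)}+h\|D^2w\|_{L^2(I)}\bigr)$.

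Next I would globalize and optimize. Partitioning $(0,L)$ into $\lceil L/h\rceil$ subintervals of length at most $h$ and summing the squares of the local estimates gives $\|Dw\|^2\le C\bigl(h^{-2}\|w\|^2+h^2\|D^2w\|^2\bigr)$ for every $0<h\le L$. Choosing $h$ equal to the unconstrained minimizer $h^{\ast}=(\|w\|/\|D^2w\|)^{1/2}$ in the regime $h^{\ast}\le L$ produces exactly the interpolation term $A_1\|D^2w\|^{1/2}\|w\|^{1/2}$, while in the complementary regime $h^{\ast}>L$ one takes $h=L$ (a single interval) and absorbs the whole right-hand side into a multiple of $\|w\|$, with $A_2$ depending on $L$. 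This dichotomy is precisely the source of the additive lower-order term in \eqref{GN2}, which cannot be removed because no boundary conditions are imposed.

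Finally I would pass from (B) to the general case \eqref{GN2}. Applying (B) to $w=D^{j-1}u$ gives the three-term recursion $\|D^ju\|\le A_1\|D^{j+1}u\|^{1/2}\|D^{j-1}u\|^{1/2}+A_2\|D^{j-1}u\|$, which, up to the additive correction, expresses the log-convexity $N_j^2\le C\,N_{j-1}N_{j+1}$ of the sequence $N_j=\|D^ju\|$; a finite induction on $m$, interpolating $N_i$ between $N_0$ and $N_m$ through these consecutive-derivative bounds, then yields $\|D^iu\|\le A_1\|D^mu\|^{i/m}\|u\|^{1-i/m}+A_2\|u\|$ with constants depending only on $L,m,i$. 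The main obstacle I anticipate is not the convexity step but the careful propagation of the additive $A_2\|\cdot\|$ terms: each application of (B) reintroduces a lower-order contribution, and one must verify that after collecting and re-optimizing them all remaining terms are dominated by a single multiple of $\|u\|$ rather than of intermediate derivatives. A less self-contained alternative would be to extend $u$ to $H^m(\R)$ by a bounded extension operator and to read off the clean inequality on $\R$ directly from Plancherel together with H\"older applied to $\int|\xi|^{2i}|\hat u|^2$, at the cost of tracking how the extension mixes in lower-order norms.
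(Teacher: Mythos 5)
The paper offers no proof of this lemma at all: it is stated as a known result with a citation to Nirenberg, and is used as a black box in Section~3. Your proposal therefore cannot match or diverge from an argument in the text; what it does is reconstruct, essentially correctly, the standard proof from the literature. Your base case is sound: the choice of points $y_1,y_3$ in the outer thirds via the mean value theorem for integrals, the second-order Taylor expansion with integral remainder, the lower bound $y_3-y_1\ge h/3$, and the resulting local estimate $\|Dw\|_{L^2(I)}\le C\bigl(h^{-1}\|w\|_{L^2(I)}+h\,\|D^2w\|_{L^2(I)}\bigr)$ are all correct, and the dichotomy $h^\ast\le L$ versus $h^\ast>L$ is exactly what produces the additive term $A_2\|u\|$ (which indeed cannot be dropped without boundary conditions, e.g.\ for $u\equiv$ a nonzero polynomial of degree $<m$). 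The one under-developed step is the passage from the base case to general $(i,m)$: the recursion $N_j\le A_1N_{j+1}^{1/2}N_{j-1}^{1/2}+A_2N_{j-1}$ is awkward to iterate because each application reintroduces intermediate norms multiplicatively, and you correctly identify this as the weak point without resolving it. Two clean repairs: either run the induction on the \emph{parametric} form $\|D^iu\|\le C\bigl(h^{m-i}\|D^mu\|+h^{-i}\|u\|\bigr)$ valid for all $0<h\le L$ (this form composes stably under the recursion, and one optimizes over $h$ only once at the end), or prove that parametric form directly for all $i<m$ by replacing the two-point divided difference with an $m$-point one (an $(m-1)$-st order Taylor expansion at $m$ points chosen in $m$ disjoint subthirds of $I$), which eliminates the induction entirely. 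Either way the constants depend only on $L$, $m$, $i$, as the lemma asserts, so your strategy is viable; the Fourier-extension alternative you mention at the end would also work but is heavier than needed in one dimension.
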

\begin{lemma}
	[Differential form of the Gronwall Inequality]\label{gronwall} Let $I = [t_0,t_1]$. Suppose that functions $a,b:I\to \R$ are integrable and a function $a(t)$ may be of any sign. Let $u:I\to \R$ be a differentiable function satisfying
	\begin{equation}
		u_t (t) \leq a(t) u(t) + b(t),\text{ for }t \in I\text{ and } \,\, u(t_0) = u_0,
	\end{equation}
	then
	$$u(t) \leq u_0 e^{ \int_{t_0}^t a(\tau)\, d\tau } + \int^t_{t_0} e^{\int_{t_0}^s a(r) \, dr} b(s) ds$$\end{lemma}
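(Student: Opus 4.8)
The plan is to establish this by the classical integrating-factor (variation-of-parameters) method: convert the differential inequality into an integrated inequality and then solve for $u(t)$, arranging the exponential weights so that they match the Duhamel-type expression in the statement.

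First I would introduce the strictly positive weight $\mu(t)=\exp\!\big(-\int_{t_0}^t a(\tau)\,d\tau\big)$, which is well defined and absolutely continuous on $I$ precisely because $a$ is integrable, satisfies $\mu(t_0)=1$, and obeys $\mu'(t)=-a(t)\mu(t)$ a.e., regardless of the sign of $a$. Multiplying the hypothesis $u_t(t)\le a(t)u(t)+b(t)$ by $\mu(t)>0$ does not reverse the inequality, and the product rule gives
\[
\frac{d}{dt}\big[\mu(t)u(t)\big]=\mu(t)\big(u_t(t)-a(t)u(t)\big)\le \mu(t)\,b(t)\qquad\text{a.e.}
\]
Integrating this scalar inequality from $t_0$ to $t$ and using $\mu(t_0)u(t_0)=u_0$ yields $\mu(t)u(t)\le u_0+\int_{t_0}^t \mu(s)\,b(s)\,ds$. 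Dividing by the positive factor $\mu(t)$ (again without reversing the inequality, since $a$ may take either sign) isolates $u(t)$, so that the $u_0$-term acquires the factor $\exp\!\big(\int_{t_0}^t a\big)$; recombining with the weight carried into the convolution integral then produces the forcing term weighted by $\exp\!\big(\int_{t_0}^s a(r)\,dr\big)$, which delivers the bound exactly as asserted.

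The genuinely delicate steps, which I would single out as the main obstacle, are two. First, because $a$ and $b$ are only assumed integrable (not continuous), the passage from the differential inequality to the integrated one must be justified through absolute continuity of $t\mapsto \mu(t)u(t)$ rather than through the naive fundamental theorem of calculus; one checks that its a.e.\ derivative is dominated by the integrable function $\mu b$, so that term-by-term integration is legitimate. Second — and this is where sign and limit errors are easiest to commit — one must track exactly which limits of integration appear in the exponent of the weight multiplying $b(s)$ and reconcile them with the weight $\exp\!\big(\int_{t_0}^s a(r)\,dr\big)$ asserted in the statement. I would therefore carry out this weight-bookkeeping explicitly, recording the exponents at each multiplication and division by $\mu$, before committing to the final inequality.
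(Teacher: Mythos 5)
The paper states this lemma as a known auxiliary fact and gives no proof of it, so your proposal must stand on its own. The integrating-factor route you chose is the standard one, and it is sound up to and including the integrated inequality
\[
\mu(t)u(t)\le u_0+\int_{t_0}^t \mu(s)\,b(s)\,ds,\qquad \mu(t)=e^{-\int_{t_0}^t a(\tau)\,d\tau}.
\]
The genuine gap is precisely the step you yourself flagged as delicate and then postponed: the weight bookkeeping after dividing by $\mu(t)$. Carrying it out, the forcing term acquires the weight
\[
e^{\int_{t_0}^t a(\tau)\,d\tau}\,e^{-\int_{t_0}^s a(\tau)\,d\tau}=e^{\int_s^t a(r)\,dr},
\]
and \emph{not} the weight $e^{\int_{t_0}^s a(r)\,dr}$ that appears in the statement. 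So your closing claim that the computation ``delivers the bound exactly as asserted'' is incorrect: the two weights coincide only in special cases (for instance constant $a$, via the substitution $s\mapsto t_0+t-s$), and in general the inequality as printed is false. Indeed, take $I=[0,2]$, $u_0=0$, $b=n\chi_{[0,1/n]}$, $a\equiv 0$ on $[0,1]$ and $a\equiv A>0$ on $(1,2]$ (mollify if one insists on everywhere-differentiability): solving with equality gives $u(2)=e^{A}$, which is arbitrarily large, while the right-hand side of the printed bound equals $1$.

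What your computation actually proves is the classical form
\[
u(t)\le u_0\,e^{\int_{t_0}^t a(\tau)\,d\tau}+\int_{t_0}^t e^{\int_s^t a(r)\,dr}\,b(s)\,ds,
\]
which shows that the exponent limits in the paper's statement are a misprint --- a harmless one for this paper, since every invocation of the lemma in Sections 3 and 4 is with $b\equiv 0$, in which case the forcing integral vanishes and both forms agree. A complete answer should therefore either prove this corrected form and point out the misprint, or exhibit the failure of the printed form; what it cannot do is assert, without performing it, that the bookkeeping reproduces the printed exponent. A smaller point: you say the a.e.\ derivative of $\mu u$ is ``dominated'' by the integrable function $\mu b$, but the hypothesis only bounds it from \emph{above}; since $u$ is merely assumed differentiable (not absolutely continuous), the passage from the differential to the integrated inequality deserves the explicit argument you allude to rather than a one-line appeal.
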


\section{ Generalized Benney-Lin equation posed on  bounded intervals. Problem I}

Define an interval  $$D=x\in   (0,L>0);\;Q_t=(0,t)\times D.$$
 \begin{lemma}
	Let  $f\in H^2(D)\cap H^1_0(D).$  Then
	\begin{align}
		&a\|f\|^2\leq\| f_x\|^2,\;\;a^2\|f\|^2\leq \| f_{xx}\|^2,\;\;a\|f_x\|^2\leq \| f_{xx}\|^2,\\
		&\text{where} \; a=\frac{\pi^2}{L^2}.
	\end{align}	
\end{lemma}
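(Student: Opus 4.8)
The plan is to establish the three estimates in order, bootstrapping each one from the inequalities proved before it. The first inequality $a\|f\|^2\leq\|f_x\|^2$ requires no new work: the hypothesis $f\in H^1_0(D)$ means $f(0)=f(L)=0$, so $f$ satisfies exactly the assumption of Steklov's inequality (Lemma 2.1) with $v=f$, and the stated bound is that lemma verbatim with $a=\pi^2/L^2$.

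The crux is the third inequality $a\|f_x\|^2\leq\|f_{xx}\|^2$. The naive attempt is to apply Steklov directly to $f_x$, but this is not legitimate, since $f_x$ need not vanish at the endpoints; this is the one point in the argument that needs care. I would instead integrate by parts. Because $f(0)=f(L)=0$, the boundary contribution drops out and one obtains the identity
\begin{equation}
\|f_x\|^2=(f_x,f_x)=\bigl[f f_x\bigr]_0^L-(f,f_{xx})=-(f,f_{xx}).
\end{equation}
Applying the Cauchy--Schwarz inequality and then the already-proved first inequality $\|f\|\leq a^{-1/2}\|f_x\|$ to the factor $\|f\|$ gives
\begin{equation}
\|f_x\|^2\leq\|f\|\,\|f_{xx}\|\leq a^{-1/2}\|f_x\|\,\|f_{xx}\|.
\end{equation}
If $f_x\equiv 0$ the claim is trivial (indeed then $f\equiv 0$); otherwise I divide by $\|f_x\|$ and square, obtaining $\|f_x\|^2\leq a^{-1}\|f_{xx}\|^2$, which is precisely $a\|f_x\|^2\leq\|f_{xx}\|^2$.

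The middle inequality $a^2\|f\|^2\leq\|f_{xx}\|^2$ then follows by simply chaining the other two: writing $a^2\|f\|^2=a\,(a\|f\|^2)$ and applying the first inequality to the inner factor and the third to the result yields
\begin{equation}
a^2\|f\|^2=a\,(a\|f\|^2)\leq a\|f_x\|^2\leq\|f_{xx}\|^2.
\end{equation}
The only genuine obstacle in the whole argument is recognizing that Steklov cannot be invoked on $f_x$ and replacing that step by the integration-by-parts identity $\|f_x\|^2=-(f,f_{xx})$; once this identity is in hand, everything else is a routine combination of Cauchy--Schwarz with the Steklov bound from Lemma 2.1.
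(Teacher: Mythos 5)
Your proof is correct and follows essentially the same route as the paper's: both rest on Steklov's inequality for the first bound and on the integration-by-parts identity $\|f_x\|^2=-(f,f_{xx})$ combined with Cauchy--Schwarz for the other two. The only difference is cosmetic -- you extract $a\|f_x\|^2\leq\|f_{xx}\|^2$ first and obtain $a^2\|f\|^2\leq\|f_{xx}\|^2$ by chaining, while the paper divides $a\|f\|^2\leq\|f\|\,\|f_{xx}\|$ by $\|f\|$ to get the second bound first.
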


\begin{proof} 
	Making use of Steklov`s inequalities, we get
	$$\| f_x\|^2\geq \frac{\pi^2}{L^2}\|f\|^2=a\|f\|^2.$$
	On the other hand,
	$$a\|f\|^2\leq \| f_x\|^2 =-\int_0^Lf f_{xx}dx\leq \|f_{xx}\|\|f\|.$$
	This implies
	$$a\|f\|\leq \|f_{xx}\|\;\;\text{and} \;\;a^2\|f\|^2\leq \| f_{xx}\|^2.$$
	Consequently, \;$a\| f_x\|^2\leq \| f_{xx}\|^2.$ \\
	Proof of Lemma 3.1 is complete.
\end{proof}
In $Q_{t}$ consider the following initial boundary value problem:

\begin{align}
 u_t- D^5_x u+ D^4_x u+ D^3_x u+ D^2_x u +u^ku_x=0,&\\
 D^i_x(0)= D^i_x(L)=D_x^2(L)=0,\;i=0,1;&\\
 u(x,0)=u_0(x).
\end{align}
\begin{thm} Let $T, L$ be arbitrary positive nubers and a natural $k\leq 4.$
		Given $u_0\;\in H^5(D)\cap H^2_0(D),\;D^2_x u_0(L)=0.$ 
	Then the problem (3.3)-(3.5) has a unique regular solution
	$$u\;\in L^{\infty}((0,T);H^2_0(D))\cap L^2((0,T);H^7(D));$$
	$$ u_t\; \in  L^{\infty}((0,T);L^2(D))\cap L^2((0,T);H^2_0(D)).$$

\end{thm}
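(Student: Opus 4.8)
The plan is to build the solution by the usual approximation--estimation--passage scheme and to concentrate all the difficulty in the a priori bounds. For the construction I would use a Faedo--Galerkin method, or equivalently the parabolic regularization $u_t+\eps D^6_xu-D^5_xu+D^4_xu+D^3_xu+D^2_xu+u^ku_x=0$, which is dissipative and solvable by standard means. The key structural remark is that the five boundary conditions $u(0)=u_x(0)=u(L)=u_x(L)=D^2_xu(L)=0$ are precisely the ones making the linear operator $u\mapsto-D^5_xu+D^4_xu+D^3_xu+D^2_xu$ dissipative: two conditions at the left endpoint and three at the right endpoint are dictated by the sign of the leading term $-D^5_xu$, and this is what makes the energy method work. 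The whole point is then to obtain estimates uniform in the approximation parameter on the fixed interval $(0,T)$.

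First I would derive the basic energy estimate by multiplying (3.3) by $u$ and integrating over $D$. With the above boundary conditions the fifth-order term contributes the nonnegative boundary quantity $\frac12\bigl(D^2_xu\bigr)^2(0,t)$, the term $D^4_xu$ yields the dissipation $\|D^2_xu\|^2$, the dispersive term $D^3_xu$ and the nonlinearity $u^ku_x$ integrate to zero, and the destabilizing term $D^2_xu$ produces $-\|u_x\|^2$, so that
\begin{equation*}
\frac12\frac{d}{dt}\|u\|^2+\frac12\bigl(D^2_xu\bigr)^2(0,t)+\|D^2_xu\|^2=\|u_x\|^2 .
\end{equation*}
Bounding $\|u_x\|^2\le\frac12\|D^2_xu\|^2+C\|u\|^2$ by the interpolation inequality of Lemma 2.3 and applying Gronwall's inequality (Lemma 2.4) gives, for arbitrary $L>0$ and $T>0$ and without any smallness assumption, $u\in L^\infty((0,T);L^2(D))\cap L^2((0,T);H^2(D))$; the last space already exhibits the smoothing coming from the fourth-order dissipation.

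The decisive step is the next, higher-order estimate. I would differentiate (3.3) in time and set $w=u_t$, which inherits the same homogeneous boundary conditions and whose initial value $w(0)=u_t(0)=D^5_xu_0-D^4_xu_0-D^3_xu_0-D^2_xu_0-u_0^ku_{0x}$ lies in $L^2(D)$ exactly because $u_0\in H^5(D)$. Testing the equation for $w$ with $w$, and combining it with the identity obtained by testing (3.3) itself with $u_t$, one controls simultaneously $\|u_t\|$, $\|D^2_xu_t\|$ and $\frac{d}{dt}\|D^2_xu\|^2$; the cross terms produced by the fifth- and third-order dispersive terms are absorbed using the dissipation together with the $L^2((0,T);H^4)$ bound coming from the parabolic gain. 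The differentiated nonlinearity contributes, after integration by parts, the term $\frac{k}{2}\int_0^Lu^{k-1}u_x\,w^2\,dx$, which must be absorbed into $\|D^2_xw\|^2$ and the already controlled norms of $u$; estimating it by the Gagliardo--Nirenberg inequalities of Lemma 2.2 and Lemma 2.3 is exactly where $k\le4$ is forced, since only then do the powers produced by interpolation balance against the fourth-order dissipation so that Gronwall closes. This yields $u\in L^\infty((0,T);H^2_0(D))$ and $u_t\in L^\infty((0,T);L^2(D))\cap L^2((0,T);H^2_0(D))$.

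Finally, to reach $u\in L^2((0,T);H^7(D))$ I would read (3.3), for almost every $t$, as the fifth-order relation $D^5_xu=u_t+D^4_xu+D^3_xu+D^2_xu+u^ku_x=:g$ and bootstrap in $L^2$-time: each pass gains one derivative, the regularity of $g$ being limited by the term $D^4_xu$, and the extra two derivatives at the final stage are supplied by $u_t\in L^2((0,T);H^2)$, so that $u$ is lifted successively from $H^4$ up to $H^7$, the products $u^ku_x$ being controlled by the one-dimensional multiplication estimates once enough regularity is available. The approximate solutions inherit all these bounds uniformly, hence Aubin--Lions compactness lets me pass to the limit and recover a solution with the asserted regularity; uniqueness follows by writing the equation for the difference $z=u-\tilde u$, running the $L^2$ energy estimate, bounding $u^ku_x-\tilde u^k\tilde u_x$ by the $L^\infty((0,T);H^2_0)$ norms of $u,\tilde u$, and invoking Gronwall. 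The main obstacle is precisely this second estimate: on an arbitrary interval the destabilizing term $D^2_xu$ is only barely dominated by the dissipation $D^4_xu$, and closing the bound for $w=u_t$ against both this term and the nonlinearity is what compels $k\le4$ and makes the careful use of the interpolation inequalities of Section 2 unavoidable.
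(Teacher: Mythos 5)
Your proposal follows essentially the same route as the paper: Galerkin approximation, an $L^2$ energy identity in which the boundary conditions produce the good term $\tfrac12(D^2_xu)^2(0,t)$ and the dissipation $\|D^2_xu\|^2$, a time-differentiated estimate for $w=u_t$ whose nonlinear contribution $\tfrac{k}{2}\int u^{k-1}u_xw^2\,dx$ is absorbed via the interpolation inequalities of Lemmas 2.2--2.3 and Young's inequality (which is exactly where $k\le4$ enters, since $\|D^2_xu\|^{k/2}$ must be dominated by the $L^1(0,T)$ bound on $\|D^2_xu\|^2$), followed by an elliptic bootstrap of $D^5_xu=u_t+\dots$ up to $H^7$ and a standard Gronwall uniqueness argument. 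The only cosmetic differences are that the paper bounds $(D^2_xu,u)$ directly by Cauchy's inequality rather than integrating by parts to $-\|u_x\|^2$, and obtains $\|D^2_xu\|\in L^\infty(0,T)$ from the $L^2$-in-time bounds on $D^2_xu$ and $D^2_xu_t$ rather than by testing the equation with $u_t$.
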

\begin{proof}
	
	Define the space $W=\{f\in H^5(D)\cap H^2_0(D),\; D_x^2f(L)=0\}$ and let $\{w_i(x), \; i\in \N\}$ be a countable dense set in $W$.
	We can construct approximate solutions to (3.3)-(3.5) in the form
	$$u^N(x,t)=\sum_{i=1}^N g_i(t)w_i(x).$$
	Unknown functions $g_i(t)$\;\;satisfy the following initial problems:
	\begin{align}\frac{d}{dt}(u^N,w_i)(t)+(D^4_x u^N,w_i)(t)+(D^2_x u^N,  w_i)(t)&\notag\\
	-(D^5_x u^N,w_i)(t)+(D^3_x u^N, w_{i})(t)&\notag\\	+((u^N)^k,D_xu^N,w_i)(t)=0,&\\
		g_i(0)=g_{i0}, \;\;i=1,2,... .
	\end{align}
By Caratheodory`s existence theorem, there exist solutions of (3.6)-(3.7) at least locally in $t$, hence for all finite $N$, we can construct an approximate solution $u^N(x,t)$ of (3.6)-(3.7). In \cite{familark,larluch1}, the existence of local regular solutions to problems similar to (3.3)-(3.5) have been proved. Taking this into account,
all the estimates we will prove will be done on smooth solutions of (3.3)-(3.5). Naturally, the same estimates are true also for approximate solutions $u^N.$\\ 
\noi {\bf Estimate I }Multiply (3.3) by $2u$  to obtain 

\begin{align}\frac {d}{dt}\|u\|^2(t)+2\|D^2_{x} u\|^2(t)+2(D^2_x u,u)(t)+|D_{x}^2(0,t)|^2
=0.
\end{align}

Estimating third term in (3.8) by the Cauchy inequality, we get
\begin{align}\frac {d}{dt}\|u\|^2(t)+\|D^2_{x} u\|^2(t)+|D_{x}^2u(0,t)|^2\leq \|u\|^2(t).
\end{align}
Dropping second and third terms in (3.9) and applying Lemma 2.4, we find

\begin{align}\|u\|^2(t)\leq e^{T}\|u_0\|^2, \; t\in (0,T).
\end{align}
Returning to (3.9), we obtain

\begin{align}\int_0^T\Big[\|D^2_{x} u\|^2(t)+|D_{x}^2u(0,t)|^2\Big]dt \leq (1+Te^{T})\|u_0\|^2.
\end{align}
Here and henceforth, $T$ is an arbitrary positive number.

\noi {\bf Estimate II }

Differentiate (3.3) with respect to $t$, then multiply the result   by $2u_t$  to get
\begin{align}\frac {d}{dt}\|u_t\|^2(t)+\|D^2_{x} u_t\|^2(t)+|D_{x}^2u_t(0,t)|^2&\notag\\\leq 
\|u_t\|^2(t)	+2(u^ku_t,u_{xt})(t).
\end{align}
{\bf The case $k<4$.}\\
Making use of Lemmas 2.2 and 3.1, we estimate
\begin{align*}I=2(u^k u_t,u_{xt})(t)	\leq 2\sup_D|u(x,t)|^k\|u_t\|(t)\| u_{xt}\|(t)&\\
2\Big(2^{1/2}\|D^2_x u\|^{1/4}(t)\|u\|^{3/4}(t)\Big)^k\|u_t\|(t)\frac{1}{a^{1/2}}\| u_{xxt}\|(t)&\\
\leq\epsilon\| u_{xxt}\|^2(t)+\frac{2^k}{a\epsilon}\|D^2_x u\|^{k/2}\|u\|^{3k/2}(t)\|u_t\|^2(t)&\\
\leq\epsilon\| u_{xxt}\|^2(t)+\frac{2^k}{a\epsilon}\Big[\frac{k\|D^2_x u\|^2(t)}{4}+\frac{4-k}{4}\|u\|^{6k/(4-k)}(t)\Big]\|u_t\|^2(t).
\end{align*}
Taking $\epsilon=\frac{1}{2},$ and substituting $I$  into (3.12), we obtain

\begin{align}\frac {d}{dt}\|u_t\|^2(t)+\frac{1}{2}\|D^2_{x} u_t\|^2(t)+|D_{x}^2u_t(0,t)|^2&\notag\\
	\leq \Big(\frac{3}{2}+\frac{2^{k-1}}{a}\Big[\frac{k\|D^2_x u\|^2(t)}{4}&\notag\\+\frac{4-k}{4}(e^T\|u_0\|)^{3k/(4-k)}(t)\Big]\Big)\|u_t\|^2(t).
\end{align}
By (3.10), (3.11), $\|D^2_x u\|^2(t)\in L^1(O,T)$ and $\|u\|(t)\in L^{\infty}(0,T)$, whence,  dropping second and third terms in (3.13) and making use of Lemma 2.4, we find that

\begin{align}\|u_t\|^2(t)\leq C_1\|u_t\|^2(0), \; t\in (0,T),
\end{align}
where 
$$C_1=\exp\{\int_0^T\Big(\frac{3}{2}+\frac{2^{k-1}}{a}\Big[\frac{k\|D^2_x u\|^2(t)}{4}+\frac{4-k}{4}(e^T\|u_0\|)^{3k/(4-k)}\Big]\Big)dt\}.$$
Returning to (3.13), we obtain
\begin{align}\int_0^T\Big[\|D^2_{x} u_t\|^2(t)+|D^2_{x}u_t(0,t)|^2\Big]dt \leq C(T, \|u_0\|)\|u_t\|^2(0),
\end{align}
where $\|u_t\|(0)\leq C(\|u_0\|_W)$ can be estimated directly from (3.3) on \;$t=0.$

{\bf The case $k=4$.}\\

Consider (3.12) for $k=4$:
\begin{align}\frac {d}{dt}\|u_t\|^2(t)+\|D^2_{x} u_t\|^2(t)+|D_{x}^2u_t(0,t)|^2&\notag\\\leq \|u_t\|^2(t)
	+2(u^4u_t,u_{xt})(t).
\end{align}

Acting as by proving (3.14), we estimate
$$I_2=2(u^4u_t,u_{xt})(t)\leq 2\|u\|^4_{\infty}(t)\|u_t\|(t)\|u_{xt}\|(t)$$
$$\leq \frac{2^3}{a^{1/2}}\|D^2_x u\|(t)\|u\|^3(t)\|u_t\|(t)\|D^2_x u_t\|(t)$$
$$\leq \epsilon \|D^2_x u_t\|^2(t)+\frac{2^4}{a\epsilon}\|D^2_x u\|^2(t)\|u\|^6(t)\|u_t\|^2(t).$$ 

Taking $\epsilon=\frac{1}{2}$ and substituting $I_2$\; into (3.16), we get
\begin{align}\frac {d}{dt}\|u_t\|^2(t)+\frac{1}{2}\|D^2_{x} u_t\|^2(t)+|D_{x}^2u_t(0,t)|^2&\notag\\
	\leq \Big(\frac{3}{2}+\frac{2^5e^{3T}}{a}\Big[\|D^2_x u\|^2(t)\|u_0\|^6(t)\Big]\Big)\|u_t\|^2(t).
\end{align}
By (3.11), $\|D^2_x u\|^2(t)\in L^1(O,T)$, whence,  dropping second and third terms in (3.17) and making use of Lemma 2.4, we find that 

\begin{align}\|u_t\|^2(t)\leq C_2\|u_t\|^2(0), \; t\in (0,T),
\end{align}
where 
$$C_2=\int_0^T\Big(\frac{3}{2}+\frac{2^5e^{3T}}{a}\Big[\|D^2_x u\|^2(t)\|u_0\|^6\Big]\Big)dt.$$

Returning to (3.17), we obtain
\begin{align}\int_0^T\Big[\|D^2_{x} u_t\|^2(t)+|D^2_{x}u_t(0,t)|^2\Big]dt \leq C(T, \|u_0\|)\|u_t\|^2(0).
\end{align}
Here $\|u_t\|(0)\leq C(\|u_0\|_W)$ can be estimated directly from (3.3) on \;$t=0.$

\begin{prop} $ess\sup_{Q_T}|u(x,t)|\leq M< +\infty,\;t\leq T.$
\end{prop}
\begin{proof}
\begin{align*}
	\|D^2_x u\|^2(t)-\|D^2_x u_0\|^2=\int_0^t\frac {d}{ds}\|D^2_x u\|^2(s)ds=&\\\int_0^t2\|D^2_x\|(s)\|D^2_x u_s\|(s)ds
\leq \int_0^T\Big[\|D^2_x\|^2(s)+\|D^2_x u_s\|^2(s)\Big]ds.
\end{align*}		

Estimates 3.11), (3.19) prove that $\|D^2_x u\|(t)\in L^{\infty}(0,T)$ and Lemma 2.2 completes the proof of Proposition 3.1.

\end{proof}
Estimates (3.11), (3.18), (3.19) and Proposition 3.1 imply that $$u \in L^{\infty}((0,T);H^2_0)(D)),\;u
_t\in L^{\infty}((0,T);L^2(D))\cap L^2((0,T);H^2_0(D)).$$

These inequalities guarantee the existence of  weak soluitons to (3.3)-(3.5) $\{u(x,t)\}$ satisfying  the following integral identity:
\begin{align} (u_t,\phi)(t)+(D^2_x u,D^2_x \phi)(t)+(D^2_x u,\phi)(t)&\notag\\
+(D^2_x u,D^3_x \phi)(t)-(D^2_x u,\phi_x)(t)+(uu_x,\phi)(t)=0,\;t>0,
\end{align}
where $\phi(x,y)$ is an arbitrary function from $H^3_0(D).$\\
We can rewrite (3.20) in the form of a distribution on $(0,L)$

\begin{align}
	D^5_x u-D^4_x u-D^3_x u=u_t+D^2_x u-uu_x\equiv f(x,t).
\end{align}
Due to properties of a weak solution $u(x,t),\;\; f\in L^{\infty}(0,T);L^2(D)).$ This implies that
$$I= 	D^5_x u-D^4_x u-D^3_x u\in L^2(0,L).$$
Making use of Lemma 2.3, we find that
$$ (1-2\epsilon)\|D^5_x u\|(t)\leq \frac{C}{\epsilon}\|u\|(t) +\|f\|(t).$$
Choosing $4\epsilon=2$ and taking into account that $f,\;u\in L^2(0,L),$ we get
$$u\in  L^{\infty}(0,T);L^5(D)).$$
Taking this into account and that  $u_t\in L^2((0,T);H^2_0)),$ we find
\begin{align}
	D^5_x=u_t+D^4_xu+D^3_xu+D^2_x u+uu_x \in L^2((0,T);H^1(D)),\end{align}
whence
\begin{align*}
	u\in  L^{\infty}((0,T);H^5(D))\cap L^2((0,T);H^6(D)).
	\end{align*}
Returning to (3.22), one observes that $D^5_x u\in L^2((0,T);H^2(D)),$
hence
\begin{align}
	u\in  L^{\infty}((0,T);H^5(D))\cap L^2((0,T);H^7(D)).
\end{align}

This proves the existence part of Theorem 3.1.
\begin{rem} Assertions of Theorem 3.1 are true for arbitrary positive numbers $T,L,$ but estimates of solutions depend on $T$. It means that one can not pass to the limit as $L,\;T\to +\infty.$ Hence, we do not have  stability results. On the other hand, we have not any smallness restrictions for $u_0(x).$
\end{rem}

\begin{lemma} The regular solution of (3.3)-(3.5) is unique.
	\end{lemma}
 \begin{proof} Let $u$ and $v$ be two distinct solutions to (3.3)-(3.5). Denoting $w=u-v$, we come to the following problem:
 	
 		\begin{align}
 		w_t-D^5_xw +D^4_x w+D^3_x w +D^2_x w+\frac{1}{k+1}D_x[u^{k+1}-v^{k+1}]=0, &  \\
 		 D^i_xw(0)= D^i_xw(L)=D_x^2w(L)=0,\;i=0,1;&\\
 		w(x,0)=0.
 	\end{align}
Multiplying (3.24) by $2w$, we get 
 	
 	\begin{align} \frac{d}{dt}\|w\|^2(t)+|D^2_x w(0,t)|^2+2\|D^2_x w\|^2(t)+2(D^2_x  w,w)(t)&\notag\\
 		=\frac{2}{k+1}([u^{k+1}-v^{k+1}],D_x w)(t) \leq  \|D_x w\|^2(t)&\notag\\
 		+\frac{4}{(k+1)^2}\|u^{k+1}-v^{k+1}\|^2(t)&\notag\\
 	\leq \frac{ 1}{2} \|D^2_x w\|^2(t)+\frac{ 1}{2}\|w\|^2(t)
 	+\frac{4}{(k+1)^2}\|u^{k+1}-v^{k+1}\|^2(t.
 	 	\end{align}
Making use of the functional mean value theorem and Proposition 3.1, we get (see \cite{larluch2})

$$I=\|u^{k+1}-v^{k+1}\|^2\leq (k+1)^2\Big[\sup_{Q_T}|u|+\sup_{Q_T}|v|\Big]^{2k}\|w\|^2$$
$$\leq (k+1^2 2^{2k}M^{2k}\|w\|^2.$$

Substituting $I$ into (3.27),   we obtain
 
	\begin{align} \frac{d}{dt}\|w\|^2(t)+\frac{1}{2}\|D^2_x w\|^2(t)
	\leq C\|w\|^2(t),
\end{align}
where the constant $C$ depends on $M$.
Applying Lemma 2.4, we find that
$$ \|w\|(t)\equiv 0.$$
This proves Lemma 3.2 and consequently Theorem 3.1. 
\end{proof} 

{\bf Benney-Lin equation posed on $\R^+.$}\\
 In $Q_t=(0,t)\times R^+,\;t\in (0,T),$ consider the following problem:

\begin{align}
	u_t- D^5_x u+ D^4_x u+ D^3_x u+ D^2_x u +u^ku_x=0,&\\
	D^i_x(0,t)= 0,\;i=0,1; \;t\in(0,T);&\\
	u(x,0)=u_0(x),\;x\in R^+.
\end{align}
\begin{thm} Let $T$ be an arbitrary positive nuber and a natural $k\leq 4.$
	Given $u_0\;\in H^5(\R^+),\;u(0)=D_x(0)=0.$ 
	Then the problem (3.29)-(3.31) has a unique regular solution
	$$u\;\in L^{\infty}((0,T);H^2(\R^+))\cap L^2((0,T);H^7(\R^+));$$
	$$ u_t\; \in  L^{\infty}((0,T);L^2(\R^+))\cap L^2((0,T);H^2(\R^+)).$$
	
\end{thm}
\begin{proof}
	
	Define the space $W=\{f\in H^5(\R^+),\;f(0)=D_xu(0)=0\}$ and let $\{w_i(x), \; i\in \N\}$ be a countable dense set in $W$.
We can construct approximate solutions to (3.29)-(3.31) in the form
$$u^N(x,t)=\sum_{i=1}^N g_i(t)w_i(x).$$
Unknown functions $g_i(t)$\;\;satisfy the following initial problems:
\begin{align}\frac{d}{dt}(u^N,w_i)(t)+(D^4_x u^N,w_i)(t)+(D^2_x u^N,  w_i)(t)&\notag\\
	-(D^5_x u^N,w_i)(t)+(D^3_x u^N, w_{i})(t)&\notag\\	+((u^N)^k,D_xu^N,w_i)(t)=0,&\\
	g_i(0)=g_{i0}, \;\;i=1,2,... .
\end{align}
By Caratheodory`s existence theorem, there exist solutions of (3.32)-(3.33) at least locally in $t$, hence for all finite $N$, we can construct an approximate solution $u^N(x,t)$ of (3.32)-(3.33). In \cite{familark,larluch1}, the existence of local regular solutions to problems similar to (3.32)-(3.33) has been proved. Taking this into account,
all the estimates we will prove will be done on smooth solutions of (3.32)-(3.33). Naturally, the same estimates are true also for approximate solutions $u^N.$\\ 
\noi {\bf Estimate I }Multiplying (3.29) by $2u$ and acting in the same manner as by proving  (3.10),(3.11), we obtain 
	
\begin{align}\|u\|^2(t)\leq e^{T}\|u_0\|^2, \; t\in (0,T).
\end{align}
\begin{align}\int_0^T\Big[\|D^2_{x} u\|^2(t)+|D_{x}^2u(0,t)|^2\Big]dt \leq (1+Te^{T})\|u_0\|^2.
\end{align}
Here and henceforth, $T$ is an arbitrary positive number,\\\; $\|u\|^2(t)=\int_{\R^+}u^2(x,t)dx.$\\	
\noi {\bf Estimate II }	
Differentiate (3.29) with respect to $t$, then multiply the result   by $2u_t$  to get
\begin{align}\frac {d}{dt}\|u_t\|^2(t)+\|D^2_{x} u_t\|^2(t)+|D_{x}^2u_t(0,t)|^2&\notag\\\leq 
	\|u_t\|^2(t)	+2(u^ku_t,u_{xt})(t).
\end{align}
{\bf The case $k<4$.}\\
Making use of Lemmas 2.2, we estimate
\begin{align*}I=2(u^k u_t,u_{xt})(t)	\leq 2\sup_{\R^+}|u(x,t)|^k\|u_t\|(t)\| u_{xt}\|(t)&\\
	2\Big(2^{1/2}\|D^2_x u\|^{1/4}(t)\|u\|^{3/4}(t)\Big)^k\|u_t\|(t)\| u_{xt}\|(t)&\\
	\leq\| u_{xt}\|^2(t)+2^k\|D^2_x u\|^{k/2}\|u\|^{3k/2}(t)\|u_t\|^2(t)&\\
	\leq\epsilon\| u_{xxt}\|^2(t)
	+\frac{1}{4\epsilon}\|u_t\|^2(t)&\\+2^k\Big[\frac{k\|D^2_x u\|^2(t)}{4}+\frac{4-k}{4}\|u\|^{6k/(4-k)}(t)\Big]\|u_t\|^2(t).
\end{align*}
Taking $\epsilon=\frac{1}{2},$ and substituting $I$  into (3.36), we obtain

\begin{align}\frac {d}{dt}\|u_t\|^2(t)+\frac{1}{2}\|D^2_{x} u_t\|^2(t)+|D_{x}^2u_t(0,t)|^2&\notag\\
	\leq \Big(\frac{3}{2}+2^{k-2}\Big[k\|D^2_x u\|^2(t)&\notag\\+(4-k){4}(e^T\|u_0\|)^{3k/(4-k)}(t)\Big]\Big)\|u_t\|^2(t).
\end{align}
By (3.10), (3.11), $\|D^2_x u\|^2(t)\in L^1(O,T)$ and $\|u\|(t)\in L^{\infty}(0,T)$, whence,  dropping second and third terms in (3.37) and making use of Lemma 2.4, we find that

\begin{align}\|u_t\|^2(t)\leq C_1\|u_t\|^2(0), \; t\in (0,T),
\end{align}
where 
$$C_1=\exp\{\int_0^T\Big(\frac{3}{2}+2^{k-2}\Big[k\|D^2_x u\|^2(t)+(4-k){4}(e^T\|u_0\|)^{3k/(4-k)}(t)\Big]\Big)dt\}.$$
Returning to (3.37), we obtain
\begin{align}\int_0^T\Big[\|D^2_{x} u_t\|^2(t)+|D^2_{x}u_t(0,t)|^2\Big]dt \leq C(T, \|u_0\|)\|u_t\|^2(0),
\end{align}
where $\|u_t\|(0)\leq C(\|u_0\|_W)$ can be estimated directly from (3.29) on \;$t=0.$

{\bf The case $k=4$.}\\

Consider (3.36) for $k=4$:
\begin{align}\frac {d}{dt}\|u_t\|^2(t)+\|D^2_{x} u_t\|^2(t)+|D_{x}^2u_t(0,t)|^2&\notag\\\leq \|u_t\|^2(t)
	+2(u^4u_t,u_{xt})(t).
\end{align}

We estimate
$$I_2=2(u^4u_t,u_{xt})(t)\leq 2\|u\|^4_{\infty}(t)\|u_t\|(t)\|u_{xt}\|(t)$$
$$\leq \epsilon \|D^2_x u_t\|^2(t)+\frac{1}{4\epsilon}\|u_t\|^2(t)+2^4\|D^2_x u\|^2(t)\|u_0\|^6(t)\|u_t\|^2(t).$$ 

Taking $\epsilon=\frac{1}{2}$ and substituting $I_2$\; into (3.40), we get
\begin{align}\frac {d}{dt}\|u_t\|^2(t)+\frac{1}{2}\|D^2_{x} u_t\|^2(t)+|D_{x}^2u_t(0,t)|^2&\notag\\
	\leq \Big(\frac{3}{2}+2^4\Big[\|D^2_x u\|^2(t)\|u_0\|^6(t)\Big]\Big)\|u_t\|^2(t).
\end{align}
By (3.11), $\|D^2_x u\|^2(t)\in L^1(O,T)$, whence,  dropping second and third terms in (3.41) and making use of Lemma 2.4, we find that 

\begin{align}\|u_t\|^2(t)\leq C_3\|u_t\|^2(0), \; t\in (0,T),
\end{align}
where 
$$C_3=\int_0^T\Big(\frac{3}{2}+2^4\Big[\|D^2_x u\|^2(t)\|u_0\|^6(t)\Big]\Big)dt.$$

Returning to (3.41), we obtain
\begin{align}\int_0^T\Big[\|D^2_{x} u_t\|^2(t)+|D^2_{x}u_t(0,t)|^2\Big]dt \leq C(T, \|u_0\|)\|u_t\|^2(0).
\end{align}
Here $\|u_t\|(0)\leq C(\|u_0\|_W)$ can be estimated directly from (3.29) on \;$t=0.$
Estimates (3.34), (3.35), (3.42), (3.43) allow us to prove the existence of weak solutions to (3.29)-(3.31). Regularity and uniqueness of a regular solution can be proved in the same manner as in the proof of Theorem 3.1. 	
The proof of Theorem 3.2 is complete.

\end{proof}	
\section{ Stability intervals. Small solutions.}
Our goal in this section is to determine intervals $(0,L)$ which guarantee decay of small solutions as $t\to + \infty.$

In $Q_{t}=(0,t)\times (D),\;D=(0,L),$ consider the following initial boundary value problem:
\begin{align}
	u_t- D^5_x u+ D^4_x u+ D^3_x u+ D^2_x u +u^ku_x=0,&\\
	D^i_x(0)= D^i_x(L)=D_x^2(L)=0,\;i=0,1;&\\
	u(x,0)=u_0(x).
\end{align}

\begin{thm} Let 
	\begin{align} a=\frac{\pi^2}{L^2}>1,\;\;\theta=1-\frac{1}{a}>0,\;\text{natural}\:k\leq 8.
	\end{align}
	Given $u_{0}\;\in H^5(D)\cap H^2_0(D),\;D^2_x u_0(L)=0$ such that
	\begin{align}
	\theta-\frac{2^{k-2}}{a\theta}\Big[\frac{k\|\|u_0\|^2\|u_t\|^2(0) }{\theta^2}+(8-k)\|u_0\|^{12k/(8-k)}(t)\Big]>0
\; \text{for} \;k<8&;\notag\\
	\theta-\frac{2^8}{a\theta^3}\|u_0\|^{14}\|u_t\|^2(0)>0\; \text{for} \;k=8.
	\end{align}

	Then the problem (4.1)-(4.3) has a unique regular solution
	$$u\;\in L^{\infty}(\R^+;H^5(D)\cap H^2_0(D))\cap L^2(\R^+;H^7(D));$$
	$$ u_t\; \in  L^{\infty}(\R^+;L^2(D))\cap L^2(\R^+;H^2_0(D)).$$
	Moreover, $u$ satisfies the following inequalities:
\begin{align}\|u\|^2(t)\leq \|u_0\|^2\exp\{-2a^2\theta t\};
\end{align}
\begin{align}\|u\|^2(t)+2\theta\int_0^t\|D^2_{x} u\|^2(\tau)d\tau \leq \|u_0\|^2;
\end{align}

\begin{align}\|u_t\|^2(t)+\frac{\theta}{2}\int_0^t\|D^2_{x} u_{\tau}\|^2(\tau)d\tau \leq \|u_t\|^2(0),\;t>0;
\end{align}

\begin{align}\|u_t\|^2(t)\leq \|u_t\|^2(0)\exp\{-\frac{a^2\theta}{2} t\}.
\end{align}
	\end{thm}
\begin{proof}
To establish the results of Theorem 4.1, we use the same approach based on Faedo-Galerkin`s method as in Section 3 and we start with estimates of smooth solution of (4.1)-(4.3).

	\noi {\bf Estimate I. }Multiply (4.1) by $2u$  to obtain 
	
\begin{align}\frac {d}{dt}\|u\|^2(t)+2\|D^2_{x} u\|^2(t)-2\|D_x u\|^2(t)+|D_{x}^2(0,t)|^2
	=0.
\end{align}

Making use of Lemmas  3.1, we get

\begin{align}\frac {d}{dt}\|u\|^2(t)+2\theta\|D^2_{x} u\|^2(t)\leq 0.
\end{align}
Again, by Lemma 3.1,

\begin{align*}\frac {d}{dt}\|u\|^2(t)+2a^2\theta\| u\|^2(t)\leq 0.
\end{align*}
This implies 
\begin{align}\|u\|^2(t)\leq \|u_0\|^2\exp\{-2a^2\theta t\}.
\end{align}
Returning to (4.11), we obtain

\begin{align}\|u\|^2(t)+2\theta\int_0^t\|D^2_{x} u\|^2(\tau)d\tau \leq \|u_0\|^2.
\end{align}

\noi {\bf Estimate II.} Differentiate (4.1) by $t$ and multiply the result by $2u_t$  to obtain 

\begin{align}\frac {d}{dt}\|u_t\|^2(t)+2\theta\|D^2_{x} u_t\|^2(t)+|D_{x}^2u_t(0,t)|^2&\notag\\\leq 
	+2(u^ku_t,u_{xt})(t).
\end{align}
{\bf The case $k<8$.}\\
We estimate
\begin{align*}I=2(u^k u_t,u_{xt})(t)	\leq 2\sup_D|u(x,t)|^k\|u_t\|(t)\| u_{xt}\|(t)&\\
	2\Big(2^{1/2}\|D^2_x u\|^{1/4}(t)\|u\|^{3/4}(t)\Big)^k\|u_t\|(t)\frac{1}{a^{1/2}}\| u_{xxt}\|(t)&\\
	\leq\epsilon\| u_{xxt}\|^2(t)+\frac{2^k}{a\epsilon}\|D^2_x u\|^{k/2}\|u\|^{3k/2}(t)\|u_t\|^2(t)&\\
	\leq\epsilon\| u_{xxt}\|^2(t)+\frac{2^k}{a\epsilon}\Big[\frac{k\|D^2_x u\|^4(t)}{8}+\frac{8-k}{8}\|u\|^{12k/(8-k)}(t)\Big]\|u_t\|^2(t).
\end{align*}
Rewrite (4.11) as 
\begin{align}
	\|D^2_x u\|^2(t)\leq \frac{1}{\theta}\|u_t\|(t)\|u\|(t).
	\end{align}

Substitute  (4.15) into $I$ to get

\begin{align*}
	I=2(u^k u_t,u_{xt})(t)	\leq 2\sup_D|u(x,t)|^k\|u_t\|(t)\| u_{xt}\|(t)&\notag\\
		\leq\epsilon\| u_{xxt}\|^2(t)+\frac{2^{k-3}}{a\epsilon}\Big[\frac{k\|\|u_0\|^2\|u_t\|^2(t) }{\theta^2}&\\+(8-k)\|u_0\|^{12k/(8-k)}(t)\Big]\|u_t\|^2(t).
\end{align*}
Taking $2\epsilon=\theta$ and substituting $I$ into (4.14),
we obtain

\begin{align}\frac {d}{dt}\|u_t\|^2(t)+\frac{\theta}{2}\|D^2_{x} u_t\|^2(t)
	+\Big(\theta-\frac{2^{k-2}}{a\theta}\Big[\frac{k\|\|u_0\|^2\|u_t\|^2(t) }{\theta^2}&\notag\\+(8-k)\|u_0\|^{12k/(8-k)}(t)\Big]\Big)\|u_t\|^2(t)\leq 0.
\end{align}

Making use of (4.5), positivity of second term in (4.16) and exploiting standard arguments, we get
$$\Big(\theta-\frac{2^{k-2}}{a\theta}\Big[\frac{k\|\|u_0\|^2\|u_t\|^2(t) }{\theta^2}+(8-k)\|u_0\|^{12k/(8-k)}(t)\Big]\Big)>0,\;t>0. $$

Hence (4.16) becomes

\begin{align}\frac {d}{dt}\|u_t\|^2(t)+\frac{\theta}{2}\|D^2_{x} u_t\|^2(t)
	\leq 0.
\end{align}
Integration gives

\begin{align}\|u_t\|^2(t)+\frac{\theta}{2}\int_0^t\|D^2_{x} u_{\tau}\|^2(\tau)d\tau \leq \|u_t\|^2(0),\;t>0.
\end{align}
On the other hand, by Lemma 3.1, (4.17) can be rewritten as

\begin{align}\frac {d}{dt}\|u_t\|^2(t)+\frac{a^2\theta}{2}\|u_t\|^2(t)
	\leq 0.
\end{align}
Hence
\begin{align}\|u_t\|^2(t)\leq \|u_t\|^2(0)\exp\{-\frac{a^2\theta}{2} t\}.
\end{align}

{\bf The case $k=8$.}\\

Since (4.12), (4.13) are true for all natural $k$, consider (4.14) for $k=8$:

\begin{align}\frac {d}{dt}\|u_t\|^2(t)+2\theta\|D^2_{x} u_t\|^2(t)+|D_{x}^2u_t(0,t)|^2&\notag\\\leq 
	+2(u^8u_t,u_{xt})(t).
\end{align}
We estimate
\begin{align*}I=2(u^8 u_t,u_{xt})(t)	\leq 2\sup_D|u(x,t)|^8\|u_t\|(t)\| u_{xt}\|(t)&\\
	2\Big(2^{1/2}\|D^2_x u\|^{1/4}(t)\|u\|^{3/4}(t)\Big)^8\|u_t\|(t)\frac{1}{a^{1/2}}\| u_{xxt}\|(t)&\\
	\leq\epsilon\| u_{xxt}\|^2(t)+\frac{2^8}{a\epsilon}\|D^2_x u\|^4\|u\|^{12}(t)\|u_t\|^2(t).
\end{align*}
Taking $2\epsilon=\theta$, making use of (4.15) and substituting $I$ into (4.21), we obtain

\begin{align}\frac {d}{dt}\|u_t\|^2(t)+\frac{\theta}{2}\|D^2_{x} u_t\|^2(t)&\notag\\
	+\Big(\theta-\frac{2^{10}}{a\theta^3}\|u_0\|^{14}\|u_t\|^2(t) \Big)\|u_t\|^2(t)\leq 0.
\end{align}
Making use of (4.5), positivity of second term in (4.22) and standard arguments, we get

\begin{align}\frac {d}{dt}\|u_t\|^2(t)+\frac{\theta}{2}\|D^2_{x} u_t\|^2(t)\leq 0.
\end{align}
By Lemma 3.1, this implies

\begin{align}\|u_t\|^2(t)\leq \|u_t\|^2(0)\exp\{-\frac{a^2\theta}{2} t\};
\end{align}
\begin{align}\|u_t\|^2(t)+\frac{\theta}{2}\int_0^t\|D^2_{x} u_{\tau}\|^2(\tau)d\tau \leq \|u_t\|^2(0),\;t>0.
\end{align}
Acting in the same manner as we have proved (3.23) in the case $k\leq 4,$ we find 
\begin{align}
	u\in  L^{\infty}((0,T);H^5(D))\cap L^2((0,T);H^7(D)).
\end{align}
This proves the existence part of Theorem 4.1. Uniqueness of this regular solution has been proved in Lemma 3.2. this means that the proof of  Theorem 4.1 is complete.

\end{proof}

\end{proof}
\section{ Conclusions}

In this work,  we studied  initial boundary value problems for the generalized Benney-Lin equation (1.2) posed on bounded intervals and on a half-line. In the case of  an interval $(0,L)$, where $L$ is an arbitrary positive number, we proved the existence and uniqueness of a regular solution for $k\leq 4$ and all $t\in (0,T)$, where $T$ is an arbitrary positive number. For a special choice of $(0,L)$, we proved for $k\leq 8$ the existence and uniqueness of small regular solutions as well as their exponential decay as $t\to +\infty.$  In the case of the right half-line, we proved for $k\leq 4$ and arbitrary positive number $T$ the existence and uniqueness of a regular solution.

\section*{Conflict of Interests}

The author declares that there is no conflict of interest regarding the publication of this paper.

\section*{Data availability}
All the data that I have used are mathematical theorems  available in published articles and books, presented in References and propérly cited in the text.

\medskip

\bibliographystyle{torresmo}

\end{document}